\newtheorem{theorem}{Theorem}[section]
\newtheorem{example}[theorem]{Example}
\newtheorem{definition}[theorem]{Definition}
\newtheorem{remark}[theorem]{Remark}
\makeatletter \@addtoreset{equation}{section}
\newcommand{\R}{\mathbb{R}}
\newcommand{\N}{\mathbb{N}}
\newcommand{\x}{\mathbf{x}}
\newcommand{\br}{\mathbf{r}}
\newcommand{\be}{\mathbf{e}}
\newcommand{\bo}{\boldsymbol{\omega}}
\newcommand{\aaa}{\boldsymbol{\alpha}}
\newcommand{\bbb}{\boldsymbol{\beta}}
\newcommand{\A}{\mathscr{A}}
\newcommand{\B}{\mathscr{B}}
\newcommand{\supp}{\hbox{\rm{supp}}}
\newcommand{\New}{\hbox{\rm{New}}}
\newcommand{\Conv}{\hbox{\rm{conv}}}
\begin{document}
\clubpenalty=10000 \widowpenalty =10000

\title{A New Sparse SOS Decomposition Algorithm Based on Term Sparsity
\titlenote{This work was supported partly by NSFC under grants 61732001 and 61532019.}}

\numberofauthors{1}

\author{\medskip
Jie Wang, Haokun Li and Bican Xia\\
       \smallskip
       \affaddr{School of Mathematical Sciences, Peking University}\\
       \smallskip
      \email{wangjie212@pku.edu.cn, ker@protonmail.ch, xbc@math.pku.edu.cn}
}

\maketitle
\begin{abstract}
A new sparse SOS decomposition algorithm is proposed based on a new sparsity pattern, called cross sparsity patterns. The new sparsity pattern focuses on the sparsity of terms and thus is different from the well-known correlative sparsity pattern which focuses on the sparsity of variables though the sparse SOS decomposition algorithms based on these two sparsity patterns both take use of chordal extensions/chordal decompositions. Moreover, it is proved that the SOS decomposition obtained by the new sparsity pattern is always a refinement of the block-diagonalization obtained by the sign-symmetry method. 
Various experiments show that the new algorithm dramatically saves the computational cost compared to existing tools and can handle some really huge polynomials.
\end{abstract}

\category{I.1.2}{Computing Methodologies}{Symbolic and Algebraic Manipulation}[Algebraic Algorithms]

\terms{Algorithms, Theory}

\keywords{nonnegative polynomial, sparse polynomial, cross sparsity pattern, sum of squares, chordal graph}


\section{Introduction}
Certificates of nonnegative polynomials and polynomial optimization problems (POPs) arise from many fields such as mathematics, control, engineering, probability, statistics and physics. A classical method for these problems is using sums of squares (SOS). For a polynomial $f\in\R[\x]=\R[x_1,\ldots,x_n]$ and a given monomial basis $M=\{\x^{\bo_1},\ldots,\x^{\bo_r}\}$, the SOS condition for $f$ can be converted to the problem of deciding if there exists a positive semidefinite matrix $Q$ (Gram matrix) such that $f=M^TQM$ which can be effectively solved by semidefinite programming (SDP) \cite{pa,pa1}.

When the given polynomial has many variables and a high degree, the corresponding SDP problem is hard to be dealt with by existing SDP solvers due to the very large size of the corresponding SDP matrix. On the other hand, most polynomials coming from practice have certain structures including symmetry and sparsity. So it is very important to take full advantage of structures of polynomials to reduce the size of corresponding SDP problems. In recent years, a lot of work has been done on this subject.

In the literature, there are three kinds of approaches to reduce computations by exploiting sparsity. One approach is reducing the size of the monomial basis $M$; such techniques include computing Newton polytopes \cite{re}, using the diagonal inconsistency \cite{lo1}, the iterative elimination method \cite{ko}, and the facial reduction \cite{pe}.
The second approach is exploiting the non-diagonal sparsity of the Gram matrix $Q$; such techniques include using the sparsity of variables \cite{ca,ma,nie,waki,we}, using the symmetry property \cite{ga}, using the split property \cite{dai}, and minimal coordinate projections \cite{pe1}. The third approach is exploiting the sparsity of constrained conditions of corresponding SDP problems, such as coefficient matching conditions \cite{be1,he1,yang1}.

In this paper, a new sparse SOS decomposition algorithm is proposed based on a new sparsity pattern, called {\em cross sparsity patterns}. 
Given a polynomial $f\in\R[\x]$ with the support $\A\subseteq\N^n$ and a monomial basis $M=\{\x^{\bo_1},\ldots,\x^{\bo_r}\}$, the cross sparsity pattern associated with $\A$ is represented by an $r\times r$ symmetric $(0,1)$-matrix $R_{\A}$ whose elements are defined by
\begin{equation}
R_{ij}=\begin{cases}
1, &\bo_i+\bo_j\in\mathscr{A}\cup2\B,\\
0, &\textrm{otherwise},
\end{cases}
\end{equation}
where $2\B=\{2\bo_1,\ldots,2\bo_r\}$.

It can be seen that the new sparsity pattern focuses on the sparsity of {\em terms} and thus is different from the well-known correlative sparsity pattern \cite{waki} which focuses on the sparsity of {\em variables}. For example, for a polynomial $f\in\R[\x]$, if $f$ contains a term involving all variables $x_1,\ldots,x_n$, then $f$ is not sparse in the sense of correlative sparsity patterns and hence the corresponding SDP matrix for the SOS decomposition of $f$ cannot be block-diagonalized. But $f$ may still be sparse in the sense of cross sparsity patterns (see Example \ref{ex}).

Following the chordal sparsity approaches, we associate the matrix $R_{\A}$ with an undirected graph $G(V_\mathscr{A},E_\mathscr{A})$ where $$V_\mathscr{A}=\{1,2,\ldots,r\}~ {\rm and}~ E_\mathscr{A}=\{\{i,j\}\mid i,j\in V_\mathscr{A}, i<j, R_{ij}=1\}$$ and generate a chordal extension of $G(V_\mathscr{A},E_\mathscr{A})$. Then as usual, we use matrix decompositions for positive semidefinite matrices with chordal sparsity patterns to construct sets of supports for a blocking SOS decomposition. We prove that the blocking SOS decomposition obtained by cross sparsity patterns is always a refinement of the block-diagonalization obtained by the sign-symmetry method \cite{lo1}.



We test the new algorithm on various examples. It turns out that the new algorithm dramatically reduces the computational cost compared to existing tools and can handle really huge polynomials which are unsolvable by any existing SDP solvers even exploiting sparsity.

The rest of this paper is organized as follows. Section 2 introduces some basic notions from nonnegative polynomials and graph theory. Section 3 defines a cross sparsity pattern matrix and a cross sparsity pattern graph associated with a sparse polynomial. We show that how we can exploit this sparsity pattern to obtain a block SOS decomposition for a sparse nonnegative polynomial. Moreover, we compare our approach with other methods to exploit sparsity in SOS decompositions, including correlative sparsity patterns and sign-symmetries. We discuss in Section 4 when the sparse SOS relaxation obtain the same optimal values as the dense SOS relaxation for polynomial optimization problems. The algorithm is given in Section 5. Section 6 includes numerical results on various examples. We show that the proposed {\tt SparseSOS} algorithm exhibits a significantly better performance in practice. Finally, the paper is concluded in Section 7.

\section{Preliminaries}
\subsection{Nonnegative polynomials}
Let $\R[\x]=\R[x_1,\ldots,x_n]$ be the ring of real $n$-variate polynomials. For a finite set $\A\subset\N^n$, we denote by $\Conv(\A)$ the convex hull of $\A$, and by $V(\A)$ the vertices of the convex hull of $\A$. Also we denote by $V(P)$ the vertex set of a polytope $P$. A polynomial $f\in\R[\x]$ can be written as $f(\x)=\sum_{\aaa\in\A}c_{\aaa}\x^{\aaa}$ with $c_{\aaa}\in\R, \x^{\aaa}=x_1^{\alpha_1}\cdots x_n^{\alpha_n}$. The support of $f$ is defined by $\supp(f)=\{\aaa\in\A\mid c_{\aaa}\ne0\}$, the degree of $f$ is defined
by $\deg(f)=\max\{\sum_{i=1}^n\alpha_i:\aaa\in\supp(f)\}$, and the Newton polytope of $f$ is defined as $\New(f)=\Conv(\{\aaa:\aaa\in\supp(f)\})$.

A polynomial $f\in\R[\x]$ which is nonnegative over $\R^n$ is called a {\em nonnegative polynomial}. The class of nonnegative polynomials is denoted by PSD, which is a convex cone.

A vector $\aaa\in\N^n$ is {\em even} if $\alpha_i$ is an even number for $i=1,\ldots,n$. A necessary condition for a polynomial $f(\x)$ to be nonnegative is that every vertex of its Newton polytope is an even vector, i.e. $V(\New(f))=V(\supp(f))\subseteq(2\N)^n$ \cite{re}.

For a nonempty finite set $\B\subseteq\N^n$, $\R[\B]$ denotes the set of polynomials in $\R[\x]$ whose supports are contained in $\B$, i.e., $\R[\B]=\{f\in\R[\x]\mid\supp(f)\subseteq\B\}$ and we use $\R[\mathscr{B}]^2$ to denote the set of polynomials which are sums of squares of polynomials in $\R[\B]$. The set of $r\times r$ symmetric matrices is denoted by $S^r$ and the set of $r\times r$ positive semidefinite matrices is denoted by $S_+^r$. Let $\x^{\mathscr{B}}$ be the $|\mathscr{B}|$-dimensional column vector consisting of elements $\x^{\bbb},\bbb\in\mathscr{B}$, then
\begin{equation*}
\R[\mathscr{B}]^2=\{(\x^{\mathscr{B}})^TQ\x^{\mathscr{B}}\mid Q\in S_+^{|\B|}\},
\end{equation*}
where the matrix $Q$ is called the {\em Gram matrix}.

\subsection{Chordal graphs}
We introduce some basic notions from graph theory. A {\em graph} $G(V,E)$ consists of a set of nodes $V=\{1,2,\ldots,r\}$ and a set of edges $E\subseteq V\times V$. A graph $G(V,E)$ is said to be {\em undirected} if and only if $(i,j)\in E\Leftrightarrow (j,i)\in E$. A {\em cycle} of length $k$ is a sequence of nodes $\{v_1,v_2,\ldots,v_k\}\subseteq V$ with $(v_k,v_1)\in E$ and $(v_i, v_{i+1})\in E$, for $i=1,\ldots,k-1$. A {\em chord} in a cycle $\{v_1,v_2,\ldots,v_k\}$ is an edge $(v_i, v_j)$ that joins two nonconsecutive nodes in the cycle.

An undirected graph is called a {\em chordal graph} if all its cycles of length at least four have a chord.
Chordal graphs include some common classes of graphs, such as complete graphs, line graphs and trees, and have applications in sparse matrix theory. Note that any non-chordal graph $G(V,E)$ can always be extended to a chordal graph $\widetilde{G}(V,\widetilde{E})$ by adding appropriate edges to $E$, which is called a {\em chordal extension} of $G(V,E)$. A {\em clique} $C\subseteq V$ is a subset of nodes where $(i,j)\in E$ for any $i,j\in C,i\ne j$. If a clique $C$ is not a subset of any other clique, then it is called a {\em maximal clique}. It is known that maximal cliques of a chordal graph can be enumerated efficiently in linear time in the number of vertices and edges of the graph. See for example \cite{bp,fg,go} for the details. 

Given an undirected graph $G(V,E)$, we define an extended set of edges $E^{\star}:=E\cup\{(i,i)\mid i\in V\}$ that includes all selfloops. Then, we define the space of symmetric sparse matrices as
\begin{equation}\label{sec2-eq1}
S^r(E,0):=\{X\in S^r\mid X_{ij}=X_{ji}=0\textrm{ if }(i,j)\notin E^{\star}\}
\end{equation}
and the cone of sparse PSD matrices as
\begin{equation}\label{sec2-eq2}
S^r_+(E,0):=\{X\in S^r(E,0)\mid X\succeq0\}.
\end{equation}

Given a maximal clique $C_k$ of $G(V,E)$, we define a matrix $P_{C_k}\in \R^{|C_k|\times r}$ as
\begin{equation}\label{sec2-eq3}
(P_{C_k})_{ij}=\begin{cases}
1, &C_k(i)=j,\\
0, &\textrm{otherwise}.
\end{cases}
\end{equation}
where $C_k(i)$ denotes the $i$-th node in $C_k$, sorted in the natural ordering. Note that $X_k=P_{C_k}XP_{C_k}^T\in S^{|C_k|}$ extracts a principal submatrix defined by the indices in the clique $C_k$, and $X=P_{C_k}^TX_kP_{C_k}$ inflates a $|C_k|\times|C_k|$ matrix into a sparse $r\times r$ matrix. Then, the following theorem characterizes the membership to the set $S^r_+(E,0)$ when the underlying graph $G(V,E)$ is chordal.
\begin{theorem}[\cite{ag}]\label{sec2-thm}
Let $G(V,E)$ be a chordal graph and $\{C_1,\ldots,C_t\}$ be all of the maximal cliques of $G(V,E)$. Then $X\in S_+^r(E,0)$ if and only if there exist $X_k\in S_+^{|C_k|}$ for $k=1,\ldots,t$ such that $X=\sum_{k=1}^tP_{C_k}^TX_kP_{C_k}$.
\end{theorem}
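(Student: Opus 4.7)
The ``if'' direction is the easy one. Suppose $X=\sum_{k=1}^t P_{C_k}^T X_k P_{C_k}$ with each $X_k \in S_+^{|C_k|}$. Each summand $P_{C_k}^T X_k P_{C_k}$ is positive semidefinite because $y^T P_{C_k}^T X_k P_{C_k} y = (P_{C_k}y)^T X_k (P_{C_k}y) \ge 0$ for all $y\in\R^r$, so $X$, being a sum of PSD matrices, is PSD. Moreover, entry $(i,j)$ of $P_{C_k}^T X_k P_{C_k}$ vanishes unless $i,j \in C_k$, and for any $i,j$ lying in a common clique we have $(i,j)\in E^{\star}$; hence $X \in S^r(E,0)$, so $X \in S_+^r(E,0)$.

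For the ``only if'' direction, the plan is to use induction on the number $t$ of maximal cliques, exploiting the structure afforded by a clique tree of the chordal graph $G(V,E)$. The base case $t=1$ is immediate: $G$ is complete, $C_1=V$, $P_{C_1}$ is the identity, and we take $X_1=X$. For the inductive step, I would fix a clique tree $T$ of $G$ satisfying the running intersection property and choose a leaf clique, say $C_t$, with parent $C_p$ and separator $S = C_t \cap C_p$; set $U = C_t \setminus S$. By running intersection, the nodes in $U$ belong to no other maximal clique, so once a suitable piece is subtracted off, the residual sparsity graph obtained by deleting $U$ will still be chordal and will have precisely $t-1$ maximal cliques.

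The key constructive step is to write $X = P_{C_t}^T X_t P_{C_t} + X'$ where $X_t \succeq 0$ is supported on $C_t$, $X' \succeq 0$, and $X'$ has zero rows and columns indexed by $U$. Partition the submatrix of $X$ on $C_t$ as
\begin{equation*}
X[C_t] = \begin{pmatrix} X_{UU} & X_{US} \\ X_{SU} & X_{SS} \end{pmatrix},
\end{equation*}
where $X_{UU}\succ 0$ (after a standard perturbation/limiting argument if $X_{UU}$ is only PSD). Define
\begin{equation*}
X_t = \begin{pmatrix} X_{UU} & X_{US} \\ X_{SU} & X_{SU} X_{UU}^{-1} X_{US} \end{pmatrix},
\end{equation*}
which is positive semidefinite since its Schur complement with respect to $X_{UU}$ is zero. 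Then $X'=X-P_{C_t}^T X_t P_{C_t}$ has zero rows/columns on $U$; the remaining submatrix $X'[V\setminus U]$ coincides with $X$ outside $C_t$ and equals the Schur complement $X_{SS}-X_{SU}X_{UU}^{-1}X_{US}\succeq 0$ on $S$. A short verification using the block structure shows that $X'[V\setminus U]$ is PSD and supported on the chordal subgraph induced on $V\setminus U$.

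The main obstacle is precisely this decomposition step: ensuring simultaneously that (i) $X_t$ is PSD, (ii) $X'$ is PSD, and (iii) $X'$ is supported on a strictly smaller chordal graph with $t-1$ maximal cliques. The running intersection property is what makes (iii) work, and the Schur complement identity is what makes (i) and (ii) work together. Once this is achieved, the inductive hypothesis applied to $X'[V\setminus U]$ on the chordal graph $G[V\setminus U]$ yields a decomposition $X'[V\setminus U] = \sum_{k=1}^{t-1} \widetilde{P}_{C_k}^T X_k \widetilde{P}_{C_k}$ with $X_k \succeq 0$; inflating these back to $r\times r$ matrices and adding $P_{C_t}^T X_t P_{C_t}$ gives the desired representation of $X$. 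The remaining care concerns the case where $X_{UU}$ is singular, handled by applying the argument to $X+\varepsilon I$ (which remains in $S_+^r(E,0)$), extracting decompositions with uniformly bounded $X_k$, and taking a subsequential limit as $\varepsilon\to 0^+$.
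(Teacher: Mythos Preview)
The paper does not give its own proof of this theorem: it is quoted from \cite{ag} (Agler, Helton, McCullough, Rodman) as a known result and used as a black box. So there is no in-paper argument to compare against.

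Your proof is the standard one and is correct. A couple of places deserve a word of justification if you want the write-up to be self-contained. First, when you assert that $X'$ has zero rows and columns on $U$, you are implicitly using that every neighbour of a vertex in $U$ lies in $C_t$; this follows because any edge $\{u,v\}$ lies in some maximal clique, and the running intersection property forces $u\in U$ to appear only in $C_t$. Second, the claim that $X'[V\setminus U]\succeq 0$ is exactly the statement that $X'[V\setminus U]$ equals the Schur complement of the \emph{full} matrix $X$ with respect to the block $X_{UU}$ (not merely of $X[C_t]$); this identification works precisely because $X_{U,\,V\setminus C_t}=0$, which is the observation just made. Finally, the uniform boundedness needed in the $\varepsilon\to 0^+$ limiting argument follows from reading off the diagonal of $X+\varepsilon I=\sum_k P_{C_k}^T X_k(\varepsilon) P_{C_k}$: each diagonal entry of each $X_k(\varepsilon)$ is bounded by $\max_i X_{ii}+1$, and PSD-ness then bounds the off-diagonal entries. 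With these points noted, the induction via a clique tree and Schur complements is complete and matches the classical proof in the cited reference.
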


\section{Exploiting term sparsity in SOS decompositions}\label{sec3}
A convenient but incomplete algorithm for checking global nonnegativity of multivariate polynomials, as introduced by Parrilo in \cite{pa}, is the use of sums of squares as a suitable replacement for nonnegativity. Given a polynomial $f(\x)\in\R[\x]$, if there exist polynomials $f_1(\x),\ldots,f_m(\x)$ such that
\begin{equation}\label{sec3-eq3}
f(\x)=\sum_{i=1}^mf_i(\x)^2,
\end{equation}
then we say $f(\x)$ is a {\em sum of squares} (SOS). The existence of an SOS decomposition of a given polynomial gives a certificate for its global nonnegativity. For $d\in\N$, let $\N^n_d:=\{\aaa\in\N^n\mid\sum_{i=1}^n\alpha_i\le d\}$ and assume $f\in\R[\N^n_{2d}]$. The SOS condition (\ref{sec3-eq3}) can be converted to the problem of deciding if there exists a positive semidefinite matrix $Q$ such that
\begin{equation}\label{sec3-eq7}
f(\x)=(\x^{\N^n_{d}})^TQ\x^{\N^n_{d}},
\end{equation}
which is a semidefinite programming (SDP) problem.

We say that a polynomial $f\in\R[\N^n_{2d}]$ is {\em sparse} if the number of elements in its support $\A=\supp(f)$ is much less than the number of elements in $\N^n_{2d}$ that forms a support of fully dense polynomials in $\R[\N^n_{2d}]$. When $f(\x)$ is a sparse polynomial in $\R[\N^n_{2d}]$, the size of the SDP problem (\ref{sec3-eq7}) can be reduced by eliminating redundant elements from $\N^n_{d}$. In fact, $\N^n_{d}$ in problem (\ref{sec3-eq7}) can be replaced by \cite{re}
\begin{equation}\label{sec3-eq8}
\B=\Conv(\{\frac{\aaa}{2}\mid\aaa\in V(\A)\})\cap\N^n\subseteq\N^n_{d}.
\end{equation}
There are also other methods to reduce the size of $\B$ further, see for example \cite{ko,pe}.

\subsection{Cross sparsity pattern}
To exploit the term sparsity of polynomials in SOS decompositions, we introduce the notion of cross sparsity patterns, which, roughly speaking, is measured by the different kinds of cross products of monomials arising in the objective polynomial $f(\x)$.
\begin{definition}
Let $f(\x)\in\R[\x]$ with $\supp(f)=\mathscr{A}$. Assume that $\x^{\B}=\{\x^{\bo_1},\ldots,\x^{\bo_r}\}$ is a monomial basis. An $r\times r$ {\em cross sparsity pattern matrix} $\mathbf{R}_{\mathscr{A}}=(R_{ij})$ is defined by
\begin{equation}\label{sec3-eq1}
R_{ij}=\begin{cases}
1, &\bo_i+\bo_j\in\mathscr{A}\cup2\B,\\
0, &\textrm{otherwise},
\end{cases}
\end{equation}
where $2\B=\{2\bo_1,\ldots,2\bo_r\}$.

Given a cross sparsity pattern matrix $\mathbf{R}_\mathscr{A}=(R_{ij})$, the graph $G(V_\mathscr{A},E_\mathscr{A})$ where $$V_\mathscr{A}=\{1,2,\ldots,r\}~ {\rm and}~ E_\mathscr{A}=\{\{i,j\}\mid i,j\in V_\mathscr{A}, i<j, R_{ij}=1\}$$ is called the {\em cross sparsity pattern graph}.
\end{definition}

To apply Theorem \ref{sec2-thm}, we generate a chordal extension $\widetilde{G}(V_\mathscr{A},\widetilde{E}_\mathscr{A})$ of the cross sparsity pattern graph $G(V_\mathscr{A},E_\mathscr{A})$ and use the extended
cross sparsity pattern graph $\widetilde{G}(V_\mathscr{A},\widetilde{E}_\mathscr{A})$ instead of $G(V_\mathscr{A},E_\mathscr{A})$.

\begin{remark}
Given a graph $G(V_\mathscr{A},E_\mathscr{A})$, there may be many different chordal extensions and choosing anyone of them is valid for deriving the sparse SOS decompositions presented in this paper. For example, we can add edges to all of the connected components of $G(V_\mathscr{A},E_\mathscr{A})$ such that every connected component becomes a complete subgraph to obtain a chordal extension. The chordal extension with the least number of edges is called the {\em minimum chordal extension}. Finding the minimum chordal extension of a graph is an NP-hard problem in general. Finding a chordal extension of a graph is equivalent to calculating the symbolic sparse Cholesky factorization of its adjacency matrix. The resulted sparse matrix represents a chordal extension. The minimum chordal extension corresponds to the sparse Cholesky factorization with the minimum fill-ins. Fortunately, several heuristic algorithms, such as the minimum degree ordering, are known to efficiently produce a good approximation. For more information on symbolic Cholesky factorizations with the minimum degree ordering and minimum chordal extensions, see \cite{am,be,he}.
\end{remark}

\subsection{Sparse SOS relaxations}\label{sec-ssos}
Given $\A\subseteq\N^n$ with $V(\A)\subseteq(2\N)^n$, assume that $\B$ is the support set of a monomial basis. Let the set of SOS polynomials supported on $\A$ be
\begin{equation*}
\Sigma(\mathscr{A}):=\{f\in\R[\A]\mid\exists Q\in S_+^r\textrm{ s.t. }f=(\x^{\mathscr{B}})^TQ\x^{\mathscr{B}}\}.
\end{equation*}
Generally the Gram matrix $Q$ for a sparse SOS polynomial $f(\x)$ can be dense. Let $G(V_\mathscr{A},E_\mathscr{A})$ be the cross sparsity pattern graph and $\widetilde{G}(V_\mathscr{A},\widetilde{E}_\mathscr{A})$ a chordal extension. To maintain the sparsity of $f(\x)$ in the Gram matrix $Q$, we consider a subset of SOS polynomials
\begin{equation*}
\widetilde{\Sigma}(\mathscr{A}):=\{f\in\R[\A]\mid\exists Q\in S_+^r(\widetilde{E}_{\mathscr{A}},0)\textrm{ s.t. }f=(\x^{\mathscr{B}})^TQ\x^{\mathscr{B}}\}.
\end{equation*}

By virtue of Theorem \ref{sec2-thm}, the following theorem gives the blocking SOS decompositions for polynomials in $\widetilde{\Sigma}(\mathscr{A})$.
\begin{theorem}\label{sec3-thm1}
Given $\A\subseteq\N^n$ with $V(\A)\subseteq(2\N)^n$, assume that $\B=\{\bo_1,\ldots,\bo_r\}$ is the support set of a monomial basis and a chordal extension of the cross sparsity pattern graph is $\widetilde{G}(V_\mathscr{A},\widetilde{E}_\mathscr{A})$. Let $C_1, C_2, \ldots, C_t\subseteq V_\mathscr{A}$ denote the maximal cliques of $\widetilde{G}(V_\mathscr{A},\widetilde{E}_\mathscr{A})$ and $\mathscr{B}_k=\{\bo_i\in\B\mid i\in C_k\}, k=1,2,\ldots,t$. Then, $f(\x)\in\widetilde{\Sigma}(\mathscr{A})$ if and only if there exist $f_k(\x)\in\R[\mathscr{B}_k]^2$ for $k=1,\ldots,t$ such that
\begin{equation}\label{sec3-eq9}
f(\x)=\sum_{k=1}^tf_k(\x).
\end{equation}
\end{theorem}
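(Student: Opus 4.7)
The plan is to reduce the statement to Theorem~\ref{sec2-thm} by translating back and forth between Gram matrices of the full monomial basis $\x^{\B}$ and Gram matrices of the clique-restricted bases $\x^{\B_k}$, using the extraction/inflation maps $P_{C_k}$ defined in \eqref{sec2-eq3}. The observation that makes everything work is the identity $P_{C_k}\,\x^{\B} = \x^{\B_k}$, which follows directly from the definition of $P_{C_k}$ and of $\B_k$.

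For the forward direction, assume $f \in \widetilde{\Sigma}(\A)$, so by definition there is a matrix $Q \in S_+^r(\widetilde{E}_{\A},0)$ with $f = (\x^{\B})^T Q\,\x^{\B}$. Since $\widetilde{G}(V_\A,\widetilde{E}_\A)$ is chordal with maximal cliques $C_1,\dots,C_t$, Theorem~\ref{sec2-thm} supplies PSD matrices $Q_k \in S_+^{|C_k|}$ such that $Q = \sum_{k=1}^t P_{C_k}^T Q_k P_{C_k}$. Substituting and using the identity above,
\begin{equation*}
f(\x) = (\x^{\B})^T \Bigl(\sum_{k=1}^t P_{C_k}^T Q_k P_{C_k}\Bigr) \x^{\B} = \sum_{k=1}^t (\x^{\B_k})^T Q_k\,\x^{\B_k},
\end{equation*}
so setting $f_k(\x) := (\x^{\B_k})^T Q_k\,\x^{\B_k}$ gives $f_k \in \R[\B_k]^2$ and $f = \sum_k f_k$.

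For the converse, start from a decomposition $f = \sum_k f_k$ with $f_k \in \R[\B_k]^2$. Each $f_k$ can be written as $f_k = (\x^{\B_k})^T Q_k\,\x^{\B_k}$ for some $Q_k \in S_+^{|C_k|}$. Define $Q := \sum_{k=1}^t P_{C_k}^T Q_k P_{C_k}$. By the ``if'' direction of Theorem~\ref{sec2-thm}, $Q$ lies in $S_+^r(\widetilde{E}_\A,0)$, and by reversing the chain of identities above we obtain $(\x^{\B})^T Q\,\x^{\B} = \sum_k f_k = f$, so $f \in \widetilde{\Sigma}(\A)$.

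I do not expect a serious obstacle: the theorem is essentially a quadratic-form repackaging of Theorem~\ref{sec2-thm}. The only point that needs a careful word is the identity $P_{C_k}\,\x^{\B} = \x^{\B_k}$ and the fact that $\B_k$, as indexed via $C_k$, matches the rows of $P_{C_k}$; this is immediate from \eqref{sec2-eq3} and the definition of $\B_k$ in the statement. One should also note that the cross sparsity condition on $\A$ does not enter the proof directly here---it only matters that $f$ admits some Gram representation with sparsity pattern $\widetilde{E}_\A$---so no additional hypotheses on the support of $f$ beyond $f \in \R[\A]$ are invoked.
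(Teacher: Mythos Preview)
Your proof is correct and follows essentially the same route as the paper's own argument: both invoke Theorem~\ref{sec2-thm} to decompose (or reassemble) the Gram matrix $Q$ via the maps $P_{C_k}$, and both rely on the identity $P_{C_k}\x^{\B}=\x^{\B_k}$ to pass between the full quadratic form and the clique-restricted ones. The paper's proof is just a slightly terser version that handles both directions simultaneously with an ``if and only if'' chain rather than splitting them.
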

\begin{proof}
By Theorem \ref{sec2-thm}, $Q\in S_+^r(\widetilde{E}_{\mathscr{A}},0)$ if and only if there exist $Q_k\in S_+^{|C_k|},k=1,\ldots,t$ such that $Q=\sum_{k=1}^tP_{C_k}^TQ_kP_{C_k}$. So $f(\x)\in\widetilde{\Sigma}(\mathscr{A})$ if and only if there exist $Q_k\in S_+^{|C_k|},k=1,\ldots,t$ such that
\begin{align*}
f(\x)&=(\x^{\mathscr{B}})^T(\sum_{k=1}^tP_{C_k}^TQ_kP_{C_k})\x^{\mathscr{B}}\\
&=\sum_{k=1}^t(P_{C_k}\x^{\mathscr{B}})^TQ_k(P_{C_k}\x^{\mathscr{B}})\\
&=\sum_{k=1}^t(\x^{\mathscr{B}_k})^TQ_k\x^{\mathscr{B}_k},
\end{align*}
which is equivalent to that there exist $f_k(\x)\in\R[\mathscr{B}_k]^2$ for $k=1,\ldots,t$ such that $f(\x)=\sum_{k=1}^tf_k(\x)$.
\end{proof}

\subsection{Comparison with correlative sparsity patterns}
The notion of correlative sparsity patterns was introduced by Waki et al. \cite{waki} to exploit variable sparsity of polynomials in SOS programming, which also takes use of chordal extensions/chordal decompositions. An interpretation of correlative sparsity patterns in terms of the sparsity of Gram matrices was recently given in \cite{yang2}. It should be emphasized that the angles of correlative sparsity patterns and cross sparsity patterns to exploit sparsity are different. Correlative sparsity patterns focus on the sparsity of variables, while cross sparsity patterns focus on the sparsity of terms. For example, for a polynomial $f\in\R[\x]$, if $f$ contains a term involving all variables $x_1,\ldots,x_n$, then $f$ is not sparse in the sense of correlative sparsity patterns and hence the corresponding SDP matrix for the SOS decomposition of $f$ cannot be block-diagonalized. But $f$ may still be sparse in the sense of cross sparsity patterns.
\begin{example}\label{ex}
Consider the polynomial $f=x^2y^2+x^2+y^2+1-xy$. A monomial basis for $f$ is $\{1,x,y,xy,x^2,y^2\}$. The correlative sparsity pattern graph of $f$ is a complete graph, and hence the corresponding Gram matrix of $f$ cannot be blocked. On the other hand, the cross sparsity pattern graph of $f$ has three maximal cliques, corresponding to $\{1,x^2,y^2\}$, $\{1,xy\}$ and $\{x,y\}$ respectively. Hence, the corresponding Gram matrix of $f$ can be blocked into one $3\times3$ submatrix and two $2\times2$ submatrices.
\end{example}

\begin{center}
\begin{tikzpicture}[every node/.style={circle, draw=blue!50, very thick, minimum size=8mm}]
\node (n1) at (0,0) {$1$};
\node (n2) at (2,0) {$x^2$};
\node (n3) at (4,0) {$x$};
\node (n4) at (0,2) {$y^2$};
\node (n5) at (2,2) {$xy$};
\node (n6) at (4,2) {$y$};
\draw[thick] (n1)--(n2);
\draw[thick] (n1)--(n4);
\draw[thick] (n2)--(n4);
\draw[thick] (n1)--(n5);
\draw[thick] (n3)--(n6);
\end{tikzpicture}
\end{center}

\subsection{Comparison with sign-symmetries}
In \cite{lo1}, sign-symmetries are exploited to block diagonalize sums of squares programming (\cite[Theorem 3]{lo1}), which is implemented in {\tt Yalmip}.
Given a polynomial $f\in\R[\x]$ with $\supp(f)=\mathscr{A}$. The {\em sign-symmetries} of $f$ are defined by all vectors $\br\in\{0,1\}^n$ such that $\br^T\aaa\equiv0$ $(\textrm{mod }2)$ for all $\aaa\in\A$.

By virtue of sign-symmetries, SOS programming can be blocked as follows.
\begin{theorem}[\cite{lo1}]\label{css}
Given a polynomial $f\in\R[\x]$ with $\supp(f)=\mathscr{A}$, assume that $\B=\{\bo_1,\ldots,\bo_r\}$ is the support set of a monomial basis and the sign-symmetries of $f$ are defined by the binary matrix $R=[\br_1,\ldots,\br_s]$. Then $\x^{\B}$ can be blocked in the SOS programming of $f$ and $\x^{\bo_i},\x^{\bo_j}$ belong to the same block if and only if $R^T\bo_i\equiv R^T\bo_j$ $(\textrm{mod }2)$.
\end{theorem}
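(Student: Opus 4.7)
The natural approach is a Reynolds-operator (averaging) argument: the sign-symmetries generate a finite abelian group of coordinate-sign substitutions under which $f$ is invariant, and averaging an arbitrary PSD Gram matrix over this group produces a new PSD Gram matrix whose block-diagonal pattern encodes the claimed equivalence.

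First I would verify that every $\br\in\{0,1\}^n$ appearing as a sign-symmetry yields a substitution $\sigma_{\br}:x_i\mapsto(-1)^{r_i}x_i$ that fixes $f$: this is immediate from $\sigma_{\br}(\x^{\aaa})=(-1)^{\br^T\aaa}\x^{\aaa}$ together with the defining condition $\br^T\aaa\equiv 0\pmod 2$ for all $\aaa\in\supp(f)$. Writing $G$ for the $\Z/2\Z$-linear span of the columns of $R$ inside $(\Z/2\Z)^n$, the polynomial $f$ is invariant under $\sigma_{\br}$ for every $\br\in G$. Given any Gram representation $f=(\x^{\B})^T Q\x^{\B}$ with $Q\in S_+^r$, set $D_{\br}:=\mathrm{diag}((-1)^{\br^T\bo_1},\ldots,(-1)^{\br^T\bo_r})$, so that $\sigma_{\br}(\x^{\B})=D_{\br}\x^{\B}$. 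Applying $\sigma_{\br}$ to the Gram identity yields $f=(\x^{\B})^T(D_{\br}QD_{\br})\x^{\B}$, and $D_{\br}QD_{\br}\succeq 0$ since $D_{\br}$ is orthogonal. Averaging gives
\[
\widetilde{Q}\ :=\ \frac{1}{|G|}\sum_{\br\in G}D_{\br}QD_{\br}\ \in\ S_+^r,
\]
which still represents $f$ and has entries $\widetilde{Q}_{ij}=c_{ij}\,Q_{ij}$, where $c_{ij}=\tfrac{1}{|G|}\sum_{\br\in G}(-1)^{\br^T(\bo_i+\bo_j)}$.

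The main (modest) obstacle is to identify $c_{ij}$ via character orthogonality on the finite abelian group $G$: the character $\br\mapsto(-1)^{\br^T(\bo_i+\bo_j)}$ is trivial on $G$ exactly when $\br^T(\bo_i+\bo_j)\equiv 0\pmod 2$ for every $\br\in G$, and because $G$ is spanned by the columns of $R$, this happens precisely when $R^T\bo_i\equiv R^T\bo_j\pmod 2$. Hence $c_{ij}=1$ if $R^T\bo_i\equiv R^T\bo_j\pmod 2$ and $c_{ij}=0$ otherwise, so $\widetilde{Q}$ is block-diagonal with respect to the partition of $\{1,\ldots,r\}$ into equivalence classes under $i\sim j\Leftrightarrow R^T\bo_i\equiv R^T\bo_j\pmod 2$. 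Since any feasible Gram matrix may be replaced by its $G$-average, the SOS programming for $f$ can be restricted without loss of generality to Gram matrices respecting exactly this block structure, which is the statement of the theorem.
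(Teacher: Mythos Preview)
The paper does not prove this theorem: it is quoted from L\"ofberg~\cite{lo1} (note the citation in the theorem header) and used as a black box in the proof of the subsequent comparison result, Theorem~\ref{sec3-prop2}. So there is no ``paper's own proof'' to compare against here.

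That said, your Reynolds-operator argument is correct and is precisely the standard mechanism behind such symmetry reductions (this is the approach in Gatermann--Parrilo~\cite{ga} and in L\"ofberg's original paper). The only point worth making explicit is that the set of sign-symmetries of $f$ is automatically a $\Z/2\Z$-subspace of $\{0,1\}^n$, so whether $R$ is taken to list all sign-symmetries or merely a spanning set, your group $G$ coincides with the full symmetry group and the character-orthogonality computation of $c_{ij}$ goes through unchanged. With that remark, the passage from ``$\br^T(\bo_i+\bo_j)\equiv 0\pmod 2$ for all $\br\in G$'' to ``$R^T\bo_i\equiv R^T\bo_j\pmod 2$'' is immediate by $\Z/2\Z$-linearity in $\br$, and the conclusion that any feasible Gram matrix may be replaced by a block-diagonal one with the stated block pattern follows.
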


We show in Theorem \ref{sec3-prop2} that the blocking decomposition obtained by cross sparsity patterns is always a refinement of the block-diagonalization obtained by sign-symmetries.
\begin{theorem}\label{sec3-prop2}
Given a polynomial $f\in\R[\x]$ with $\supp(f)=\mathscr{A}$, assume that $\B=\{\bo_1,\ldots,\bo_r\}$ is the support set of a monomial basis and the sign-symmetries of $f$ are defined by the binary matrix $R=[\br_1,\ldots,\br_s]$. Then the blocking decomposition obtained by cross sparsity patterns is a refinement of the block-diagonalization obtained by sign-symmetries in the SOS programming of $f$.
\end{theorem}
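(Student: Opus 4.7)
The plan is to show that if $i$ and $j$ lie in the same connected component of the cross sparsity pattern graph $G(V_\mathscr{A}, E_\mathscr{A})$, then $\x^{\bo_i}$ and $\x^{\bo_j}$ lie in the same sign-symmetry block of Theorem \ref{css}. Since any chordal extension of a graph adds edges only within existing connected components, the connected components of $\widetilde{G}(V_\mathscr{A},\widetilde{E}_\mathscr{A})$ coincide with those of $G(V_\mathscr{A},E_\mathscr{A})$, and every maximal clique $C_k$ is contained in one such component. Consequently each set $\mathscr{B}_k$ of monomials produced by the cross sparsity decomposition lies entirely inside a single sign-symmetry block, which is the precise sense in which the cross sparsity decomposition refines the sign-symmetry one.

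First I would reduce the claim to the single-edge case. The relation $\bo \sim \bo' \iff R^T\bo \equiv R^T\bo' \pmod 2$ is clearly an equivalence relation on $\mathscr{B}$, so by transitivity along a path in $G(V_\mathscr{A},E_\mathscr{A})$ it suffices to verify that
\[
\{i,j\}\in E_\mathscr{A} \;\Longrightarrow\; R^T\bo_i \equiv R^T\bo_j \pmod 2.
\]

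Next I would unfold the definition of the cross sparsity pattern matrix. The edge $\{i,j\}\in E_\mathscr{A}$ means $\bo_i+\bo_j\in\mathscr{A}\cup 2\mathscr{B}$, and there are two cases. If $\bo_i+\bo_j = 2\bo_k$ for some $k$, then $R^T(\bo_i+\bo_j) = 2R^T\bo_k \equiv 0 \pmod 2$, so $R^T\bo_i \equiv R^T\bo_j \pmod 2$. If instead $\bo_i+\bo_j = \aaa$ for some $\aaa\in\mathscr{A}$, then by the very definition of sign-symmetries we have $R^T\aaa \equiv 0 \pmod 2$, which again yields $R^T\bo_i \equiv R^T\bo_j \pmod 2$. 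Combining this edge-level statement with the connected-component observation above, every $\mathscr{B}_k$ sits inside one equivalence class of $\sim$, so the SOS decomposition $f(\x) = \sum_{k=1}^t f_k(\x)$ with $f_k\in\R[\mathscr{B}_k]^2$ delivered by Theorem \ref{sec3-thm1} refines the block structure of Theorem \ref{css}.

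The argument itself is a short case analysis, so the main obstacle is conceptual rather than computational: one must be careful that the cross sparsity ``blocks'' $\mathscr{B}_k$ indexed by maximal cliques can overlap, whereas the sign-symmetry blocks form an honest partition of $\mathscr{B}$. The correct comparison is therefore made at the level of connected components of $G(V_\mathscr{A},E_\mathscr{A})$, which do form a partition and which the above edge argument shows is coarser than (i.e.\ refined by nothing finer needed to deduce) the sign-symmetry partition, while being at least as fine in the only direction relevant for blocking the Gram matrix.
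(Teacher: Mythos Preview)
Your proposal is correct and follows essentially the same approach as the paper's own proof: both reduce the refinement claim to the single-edge statement that $\{i,j\}\in E_\mathscr{A}$ implies $R^T\bo_i\equiv R^T\bo_j\pmod 2$, and both verify this by the same two-case analysis on whether $\bo_i+\bo_j$ lies in $\mathscr{A}$ or in $2\mathscr{B}$. The paper packages the argument slightly differently by introducing the graph $\overline{G}(V,\overline{E})$ with $(i,j)\in\overline{E}\iff R^T\bo_i\equiv R^T\bo_j\pmod 2$ and then observing that $E_\mathscr{A}\subseteq\overline{E}$, but this is exactly your transitivity-along-paths reduction in graph language; your additional remarks about chordal extensions preserving connected components and about overlapping cliques are valid clarifications the paper leaves implicit.
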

\begin{proof}
The block-diagonalization obtained by sign-symmetries can be represented by a graph $\overline{G}(V,\overline{E})$ with $V=\{1,\ldots,r\}$ and $(i,j)\in\overline{E}$ if and only if $R^T\bo_i\equiv R^T\bo_j$ $(\textrm{mod }2)$. Then by Theorem \ref{css}, the blocks obtained by sign-symmetries correspond to the connected components of $\overline{G}(V,\overline{E})$. To show that the blocking decomposition obtained by cross sparsity patterns is a refinement of the block-diagonalization obtained by sign-symmetries, we only need to prove that the cross sparsity pattern graph $G(V,E_\mathscr{A})$ is a subgraph of $\overline{G}(V,\overline{E})$, i.e. $E_\mathscr{A}\subseteq\overline{E}$.

By the definition of sign-symmetries, we have $R^T\aaa\equiv\mathbf{0}$ $(\textrm{mod }2)$ for all $\aaa\in\A$. By the definition of cross sparsity pattern graphs, $(i,j)\in E_\mathscr{A}$ if and only if $\bo_i+\bo_j\in\A\cup2\B$. So if $(i,j)\in E_\mathscr{A}$, then either $\bo_i+\bo_j\in\A$ or $\bo_i+\bo_j\in2\B$. In anyone of these two case, we always have $R^T(\bo_i+\bo_j)\equiv\mathbf{0}$ $(\textrm{mod }2)$, which is equivalent to $R^T\bo_i\equiv R^T\bo_j$ $(\textrm{mod }2)$. Thus $(i,j)\in\overline{E}$ as desired.
\end{proof}

\section{When do $\Sigma(\mathscr{A})$ and $\widetilde{\Sigma}(\mathscr{A})$ coincide}
Given $\A\subseteq\N^n$ with $V(\A)\subseteq(2\N)^n$, we define in Section \ref{sec-ssos} two sets of SOS polynomials: $\Sigma(\mathscr{A})$ and $\widetilde{\Sigma}(\mathscr{A})$. Generally we have $\Sigma(\mathscr{A})\supseteq\widetilde{\Sigma}(\mathscr{A})$. If $\Sigma(\mathscr{A})=\widetilde{\Sigma}(\mathscr{A})$, then the sparse SOS relaxation and the dense SOS relaxation obtain the same optimal value for the optimization of a polynomial $f$ with the support $\A$. The following theorem shows that in the quadratic case the equality $\Sigma(\mathscr{A})=\widetilde{\Sigma}(\mathscr{A})$ holds.
\begin{theorem}\label{sec3-prop1}
If for any $\aaa\in\A$, $\sum_{i=1}^n\alpha_i\le2$, then $\Sigma(\mathscr{A})=\widetilde{\Sigma}(\mathscr{A})$.
\end{theorem}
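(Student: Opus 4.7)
The inclusion $\widetilde{\Sigma}(\mathscr{A})\subseteq\Sigma(\mathscr{A})$ is immediate from the definitions, since $S_+^r(\widetilde{E}_\mathscr{A},0)\subseteq S_+^r$. So the real content is the reverse inclusion: every Gram matrix certifying $f\in\Sigma(\mathscr{A})$ can be taken to have the chordal sparsity pattern $\widetilde{E}_\mathscr{A}$. My plan is to show the even stronger statement that \emph{every} such Gram matrix $Q$ automatically lies in $S_+^r(E_\mathscr{A},0)\subseteq S_+^r(\widetilde{E}_\mathscr{A},0)$, by reading off the vanishing of its off-diagonal entries directly from the support condition $\supp(f)\subseteq\mathscr{A}$.

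First I will record the structural consequence of the hypothesis that every $\aaa\in\mathscr{A}$ satisfies $\sum_i\alpha_i\le2$. In this quadratic regime, a monomial basis $\mathscr{B}$ (for instance the one produced by (\ref{sec3-eq8})) can be taken inside $\{\mathbf{0},\mathbf{e}_1,\ldots,\mathbf{e}_n\}$, so every element of $\mathscr{B}$ has coordinate sum at most $1$. The key elementary observation is then that the map
\[
\{\bo_i,\bo_j\}\ \longmapsto\ \bo_i+\bo_j
\]
from unordered pairs in $\mathscr{B}$ to $\N^n$ is injective: a quick case split on the coordinate sum of $\bo_i+\bo_j\in\{0,1,2\}$ shows that $\{\bo_i,\bo_j\}$ is uniquely determined by its sum. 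In particular, for $i\neq j$, $\bo_i+\bo_j$ cannot coincide with any $2\bo_k$, so membership in $2\mathscr{B}$ is the same as $i=j$.

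Now take $f\in\Sigma(\mathscr{A})$ with $f=(\x^\mathscr{B})^TQ\x^\mathscr{B}$ for some $Q\in S_+^r$, and expand
\[
f=\sum_{i,j=1}^r Q_{ij}\x^{\bo_i+\bo_j}.
\]
For any pair $i<j$ with $R_{ij}=0$, i.e.\ with $\bo_i+\bo_j\notin\mathscr{A}\cup 2\mathscr{B}$, the injectivity observation implies that the coefficient of $\x^{\bo_i+\bo_j}$ in $f$ is exactly $2Q_{ij}$. Since $\bo_i+\bo_j\notin\mathscr{A}\supseteq\supp(f)$, this coefficient vanishes, forcing $Q_{ij}=0$. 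Diagonal entries are always allowed, so $Q$ satisfies $Q_{ij}=0$ for every $(i,j)\notin E_\mathscr{A}^{\star}$. Because $E_\mathscr{A}\subseteq\widetilde{E}_\mathscr{A}$, we get $Q\in S^r_+(\widetilde{E}_\mathscr{A},0)$ and therefore $f\in\widetilde{\Sigma}(\mathscr{A})$.

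The only delicate point, and the place where the hypothesis is actually used, is the injectivity of the pair-to-sum map; this is exactly what fails in higher degree, where a single monomial $\x^\gamma$ can be produced as $\bo_i+\bo_j$ in several ways, so that the coefficient-matching condition only kills a linear combination of off-diagonal $Q_{ij}$'s rather than a single entry. Once that injectivity is in hand the rest is bookkeeping, so the argument above should go through cleanly.
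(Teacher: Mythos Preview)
Your proposal is correct and follows essentially the same route as the paper: both show that any Gram matrix $Q$ for $f$ already lies in $S_+^r(E_\mathscr{A},0)$ by reading off $Q_{ij}=0$ from the support constraint when $R_{ij}=0$. The paper does this via an explicit case split on the indices $(i,j)$ with basis $M=[1,x_1,\ldots,x_n]$, while you package the same case split as the injectivity of $\{\bo_i,\bo_j\}\mapsto\bo_i+\bo_j$ on $\mathscr{B}\subseteq\{\mathbf{0},\mathbf{e}_1,\ldots,\mathbf{e}_n\}$; the content is identical.
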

\begin{proof}
Suppose $f\in\Sigma(\mathscr{A})$ is a quadratic polynomial with $\supp(f)=\mathscr{A}$. Let $M=[1,x_1,\ldots,x_n]$ be a monomial basis and assume $f=M^TQM$ for a positive semidefinite matrix $Q=(q_{ij})_{i,j=0}^n$. Let $\mathbf{R}=(R_{ij})_{i,j=0}^n$ be the corresponding cross sparsity pattern matrix for $f$. To prove $\Sigma(\mathscr{A})\subseteq\widetilde{\Sigma}(\mathscr{A})$, we need to show $Q\in S_+^{n+1}(\widetilde{E}_{\mathscr{A}},0)$, or $Q\in S_+^{n+1}(E_{\mathscr{A}},0)$. Note that $Q\in S_+^{n+1}(E_{\mathscr{A}},0)$ is equivalent to the proposition that $R_{ij}=0$ implies $q_{ij}=0$ for all $i,j$. Let $\{\be_k\}_{k=1}^n$ be the standard basis. If $i=0,j>0$, from $R_{0j}=0$ we have $\be_j\notin\A$. If $i>0,j=0$, from $R_{i0}=0$ we have $\be_i\notin\A$. If $i,j>0,i\ne j$, from $R_{ij}=0$ we have $\be_i+\be_j\notin\A$. In anyone of these three cases, we have $q_{ij}=0$ as desired.
\end{proof}

\section{Algorithm}
According to Section \ref{sec3}, a sparse SOS decomposition procedure can be easily divided into the following four steps:
\begin{enumerate}
  \item Compute the support set of a monomial basis $\B$;
  \item Generate the cross sparsity pattern graph $G(V_\mathscr{A},E_\mathscr{A})$ and a chordal extension $\widetilde{G}(V_\mathscr{A},\widetilde{E}_\mathscr{A})$;
  \item Compute all of the maximal cliques of $\widetilde{G}(V_\mathscr{A},\widetilde{E}_\mathscr{A})$ and obtain the blocking SOS problem;
  \item Use an SDP solver to solve the blocking SOS problem.
\end{enumerate}

In step 1, we compute the support set of a monomial basis $\B$ following the method in \cite{lo1}.

In step 2, different chordal extensions will lead to different blocking SOS decompositions. When implementing this step, we obtain a chordal extension $\widetilde{G}(V_\mathscr{A},\widetilde{E}_\mathscr{A})$ by adding edges to $G(V_\mathscr{A},E_\mathscr{A})$ such that every connected component becomes a complete subgraph.

The above procedure is formally stated as Algorithm \ref{alg1} (named {\tt SparseSOS}) in the following. Obviously, since we use well-known and popular methods and tools for Step 1 and Step 4, the efficiency of {\tt SparseSOS} essentially depends on Step 2 and Step 3. That is, if we may decompose the original problems into smaller subproblems via Step 2 and Step 3, the computation cost will certainly be decreased because the SDP solver in Step 4 receives smaller inputs. We will show in the next section that {\tt SparseSOS} performs well on many examples.

\begin{algorithm}
\caption{{\tt SparseSOS}}\label{alg1}
\hspace*{0.02in}{\bf input}: a polynomial $f$ with $\supp(f)=\A$\\
\hspace*{0.02in}{\bf output}: a representation $f=\sum_{i=1}^mg_i^2$ or unknown
\begin{algorithmic}[1]
\State Compute the support set of a monomial basis $\B=\{\bo_1,\ldots,\bo_r\}$;
\State Generate the cross sparsity pattern graph $G(V_\mathscr{A},E_\mathscr{A})$;
\State Take the connected components $\{C_1,\ldots,C_t\}$ of $G(V_\mathscr{A},E_\mathscr{A})$ to obtain a chordal extension $\widetilde{G}(V_\mathscr{A},\widetilde{E}_\mathscr{A})$;
\State Solve the blocking SOS problem $$f=\sum_{k=1}^tf_k,~~~ f_k\in\R[\mathscr{B}_k]^2,~~~ (\ast)$$ where $\mathscr{B}_k=\{\bo_i\in\B\mid i\in C_k\}, k=1,2,\ldots,t$;
\State If ($\ast$) is feasible, then {\bf return} $f=\sum_{i=1}^mg_i^2$. Otherwise {\bf return} unknown.
\end{algorithmic}
\end{algorithm}

\section{Numerical Experiments}\label{sec-exa}
In this section, we give numerical results to illustrate the effectiveness of the algorithm {\tt SparseSOS}. The algorithm is implemented with C++ as a tool also named {\tt SparseSOS}. It turns out that {\tt SparseSOS} is extremely powerful and can deal with some really huge polynomials that cannot be handled by other tools.

\subsection{Versions and Commands}
Our tool {\tt SparseSOS} can be downloaded at

\centerline{https://gitlab.com/haokunli/sparsesos.}

All the examples in the following subsections can be downloaded there as well. We illustrate by a very simple example how to use {\tt SparseSOS}. Suppose we want to check whether the following polynomial is SOS by {\tt SparseSOS}:
\[\begin{array}{l}
36x_0^{10}x_1^2 + 4x_0^{10}x_2^2 + 81x_0^2x_1^8x_2^2 - 84x_0^8x_1^3 + 18x_0^2x_1^8x_2 + 49x_0^6x_1^4 \smallskip \\
+ x_0^2x_1^8 + 36x_0^4x_1^4x_2^2 + 4x_0^4x_1^4x_2 + 4x_0^6x_2^2.
\end{array}\]

First, express the polynomial by $+,-,*,\hat{ }$~, integers and variables in a file, say example.txt, as follows:
\begin{verbatim}
36*x0^10*x1^2 + 4*x0^10*x2^2 + 81*x0^2*x1^8*x2^2 -
84*x0^8*x1^3 + 18*x0^2*x1^8*x2 +49*x0^6*x1^4
+ x0^2*x1^8 + 36*x0^4*x1^4*x2^2 + 4*x0^4*x1^4*x2
+ 4*x0^6*x2^2.
\end{verbatim}
Then, we only need to type in:
\begin{verbatim}
is_sos example.txt
\end{verbatim}
to run {\tt SparseSOS} on the example.

{\tt SparseSOS} uses mosek 8.1 as an LP solver and csdp 6.2 as an SDP solver. In the following subsections, we compare the performance on some examples of {\tt SparseSOS} with that of {\tt Yalmip} \cite{lo}, {\tt SOSTOOLS} \cite{pap}, and {\tt SparsePOP} \cite{waki3} which also exploit sparsity in SOS decompositions. The versions of the tools and their LP and SDP solvers are listed here: {\tt Yalmip R20181012} (LP solver: gurobi 8.1; SDP solver: mosek 8.1), {\tt SOSTOOLS303} (SDP solver: sdpt 3.4) and {\tt SparsePOP301} (SDP solver: sdpt 3.4).

All numerical examples were computed on a 6-Core Intel Core i7-8750H@2.20GHz CPU with 16GB RAM memory and ARCH LINUX SYSTEM.

\begin{table*}[htbp]\label{table1}
  \begin{center}
  \caption{Notation}
   \begin{tabular}{|c|c|}
      \hline
      \#supp&the number of support monomials of a polynomial\\
      \hline
      \#block&the size of blocks obtained by SparseSOS\\
      \hline
      $i\times j$&$i$ blocks of size $j$\\
      \hline
      *&a failure information to obtain a SOS decomposition\\
      \hline
      OM&an out-of-memory error\\
      \hline
    \end{tabular}
  \end{center}
\end{table*}




\subsection{The polynomials $B_m$}\label{ex:bxm}
Let \[B_m=\left(\sum_{i=1}^{3m+2}x_i^2\right)\left(\left(\sum_{i=1}^{3m+2}x_i^2\right)^2-2\sum_{i=1}^{3m+2}x_i^2\sum_{j=1}^mx_{i+3j+1}^2\right),\] where we set $x_{3m+2+r}=x_r$. Note that $B_m$ is modified from \cite{pa}. For any $m\in\N\backslash\{0\}$, $B_m$ is homogeneous and is an SOS polynomial. For these $B_m$'s, {\tt SparseSOS} dramatically reduces the problem sizes and the computation time (see Table 2).

\begin{table*}[htbp]\label{table2}
  \begin{center}
    \caption{Results for $B_m$}
    \begin{tabular}{|c|c|c|c|c|c|c|c|c|c|}
      \hline
        & & \multicolumn{2}{c|}{\tt SparseSOS}& \multicolumn{2}{c|}{\tt Yalmip} & \multicolumn{2}{c|}{\tt SOSTOOLS}  & \multicolumn{2}{c|}{\tt SparsePOP} \\
       \hline
       $m$ & \#supp & \#block & time & \#block & time  & \#block & time  & \#block & time \\
      \hline
      1 & 35 & $5\times 5,10\times 1$ & 0.01s & $5\times 5,10\times 1$ & 0.45s
      & $1\times35$& 0.95s &$1\times56$ &0.54s \\
      \hline
      2 & 104 & $8\times 8,56\times 1$ & 0.04s & $8\times 8,56\times 1$ & 0.95s
      & $1\times120$ &2.59s & $1\times165$ &4.66s\\
      \hline
      3 & 242 & $11\times 11,165\times 1$ & 0.15s& $11\times 11,165\times 1$ & 1.18s &$1\times286$ & 34.00s& $1\times364$ & 93.9s\\
      \hline
      4 & 476 & $14\times 14,364\times 1$ & 0.45s & $14\times 14,364\times 1$ &  2.94s& $1\times560$ & 423s & $1\times680$ &764s \\
      \hline
      5 & 833 & $17\times 17,680\times 1$ & 1.56s & $1\times969$ & OM &$1\times969$ & OM &\multicolumn{2}{c|} {OM}\\
      \hline
      10& 5408 & $32\times 32, 4960\times 1$ & 65.55s &\multicolumn{6}{c|}{}\\
      \hline
    \end{tabular}
  \end{center}
\end{table*}
\begin{remark}
It is easy to see that, for $m\le 4$, {\tt SOSTOOLS} and {\tt SparsePOP} cannot block-diagonalize the corresponding Gram matrices for $B_m$ while {\tt Yalmip} and our tool {\tt SparseSOS} reduce the Gram matrices to smaller submatrices of the same size. That is the reason why {\tt Yalmip} and {\tt SparseSOS} cost much less time on those problems. For $m\geq 5$, only {\tt SparseSOS} can work out results and {\tt Yalmip} fails to obtain a block-diagonalization. 
\end{remark}

\subsection{MCP polynomials $P_{i,j}$}\label{ex:MCP}
Monotone Column Permanent (MCP) Conjecture was given in \cite{ha}. In the dimension $4$, this conjecture is equivalent to decide whether particular polynomials $p_{1,2}, p_{1,3}, p_{2,2}, p_{2,3}$ are nonnegative (the definitions of $p_{i,j}$ can be found in \cite{ka}). Actually, it was proved that every $p_{i,j}$ multiplied by a small particular polynomial is an SOS polynomial (\cite{ka}). Let
\begin{align*}
&P_{1,2}=(a^2 + 2 b^2 + c^2)\cdot p_{1,2},\\
&P_{1,3}=p_{1,3},\\
&P_{2,2}=(a^2 + 2 b^2 + c^2)\cdot p_{2,2},\\
&P_{2,3}=(a^2 + 2 b^2 + c^2)\cdot p_{2,3}.
\end{align*}
We use {\tt SparseSOS} to certify nonnegativity of $P_{1,2}, P_{1,3}, P_{2,2}, P_{2,3}$. The result is listed in Table 3.

\begin{table*}[htbp]
  \caption{Results for $P_{i,j}$}
  \begin{center}
    \begin{tabular}{|c|c|c|c|c|c|c|c|c|c|}
      \hline
      & & \multicolumn{2}{c|}{\tt SparseSOS}& \multicolumn{2}{c|}{\tt Yalmip} & \multicolumn{2}{c|}{\tt SOSTOOLS}  & \multicolumn{2}{c|}{\tt SparsePOP} \\
       \hline
        & \#supp & \#block & time & \#block & time  & \#block & time  & \#block & time \\
      \hline
      \multirow{2}*{$P_{1,2}$} &\multirow{2}*{159} & $1\times 15,2\times 12,7\times 4,$ & \multirow{2}*{0.29s} & $1\times 15,2\times 12,7\times 4,$ &\multirow{2}*{1.86s}&\multirow{2}*{$1\times 77$}&\multirow{2}*{2.39s}&\multirow{2}*{$1\times 112$}&\multirow{2}*{2.56s}\\

      ~&~ &$1\times 3,2\times 2,3\times 1$&~&$1\times 3,2\times 2,3\times 1$& ~& ~&~& ~&~\\
      \hline
      \multirow{2}*{$P_{1,3}$} &\multirow{2}*{53} & $1\times 8,4\times 3,$ & \multirow{2}*{0.08s} & $1\times 8,4\times 3,$ &\multirow{2}*{0.41s}&\multirow{2}*{$1\times 29$}&\multirow{2}*{0.86s}&\multirow{2}*{$2\times 30,1\times 29$}&\multirow{2}*{0.52s}\\

      ~&~ &$2\times 2,5\times 1$&~&$2\times 2,5\times 1$& ~& ~&~& ~&~\\

       \hline
      \multirow{2}*{$P_{2,2}$} &\multirow{2}*{144} & $3\times 12,2\times 4,$ & \multirow{2}*{0.27s} & $3\times 12,2\times 4,$ &\multirow{2}*{0.40s}&\multirow{2}*{$1\times 25$}&\multirow{2}*{*}&\multirow{2}*{$1\times 97$}&\multirow{2}*{2.23s}\\

      ~&~ &$8\times 2,2\times 1$&~&$8\times 2,2\times 1$& ~& ~&~& ~&~\\

       \hline
      \multirow{2}*{$P_{2,3}$} &\multirow{2}*{107} & $2\times 10,1\times 8,1\times 4,$ & \multirow{2}*{0.19s} & $2\times 10,1\times 8,1\times 4,$ &\multirow{2}*{0.40s}&\multirow{2}*{$1\times 53$}&\multirow{2}*{1.62s}&\multirow{2}*{$1\times 65,1\times 60$}&\multirow{2}*{1.48s}\\

      ~&~ &$1\times 3,8\times 2,2\times 1$&~&$1\times 3,8\times 2,2\times 1$& ~& ~&~& ~&~\\

       \hline

    \end{tabular}
  \end{center}
\end{table*}

\begin{remark}
When we use the 'sparse' option, {\tt SOSTOOLS} seems to make a mistake in computing a monomial basis for $P_{2,2}$ and fails to obtain a SOS decomposition for $P_{2,2}$.
\end{remark}

\subsection{Randomly generated polynomials}
Now we present the numerical results for randomly generated polynomials. A sparse randomly generated polynomial $$f=\sum_{i=1}^kf_i^2\in\textbf{randpoly}(n,d,k,p)$$ is constructed as follows: first generate a set of monomials $M$ in the set $\x^{\N^n_{d}}$ with probability $p$, and then randomly assign the elements of $M$ to $f_1,\ldots,f_k$ with random coefficients between $-10$ and $10$. We generate $18$ random polynomials $F_1,\ldots,F_{18}$ from $6$ different classes, where $$F_1,F_2,F_3\in\textbf{randpoly}(10,6,10,0.01),$$ $$F_4,F_5,F_6\in\textbf{randpoly}(10,6,10,0.015),$$ $$F_7,F_8,F_9\in\textbf{randpoly}(10,10,10,0.001),$$
$$F_{10},F_{11},F_{12}\in\textbf{randpoly}(10,8,20,0.002),$$
$$F_{13},F_{14},F_{15}\in\textbf{randpoly}(10,8,20,0.005)$$
and $$F_{16},F_{17},F_{18}\in\textbf{randpoly}(10,8,20,0.01).$$ See Table 4 for the performance of {\tt Yalmip} and {\tt SparseSOS} on these polynomials. Since {\tt SOSTOOLS} and {\tt SparsePOP} can hardly handle these polynomials, we do not list the performance of them in the table.
\begin{table*}[htbp]
  \label{tb:randpoly}
  \caption{The result for randomly generated polynomials}
  \begin{center}
  \scalebox{0.9}{
    \begin{tabular}{|c|c|c|c|c|c||c|c|c|c|c|c|}
      \hline
      & & \multicolumn{2}{c|}{\tt SparseSOS}& \multicolumn{2}{c||}{\tt Yalmip}&& & \multicolumn{2}{c|}{\tt SparseSOS}& \multicolumn{2}{c|}{\tt Yalmip} \\
       \hline
       & \#supp & \#block & time & \#block & time&& \#supp & \#block & time & \#block & time  \\
      \hline
      \multirow{2}*{$F_1$} &\multirow{2}*{590} & $187,5,$ & \multirow{2}*{179.2s} & \multirow{2}*{$  248$} &\multirow{2}*{315.60s}&\multirow{2}*{$F_4$} &\multirow{2}*{873} & $  303,  8,$ & \multirow{2}*{1850.54s} & \multirow{2}*{$  357$} &\multirow{2}*{OM}\\

      ~&~ &$6\times 2,44\times 1$&~&~& ~& ~&~ &$3\times 2,40\times 1$&~&~& ~\\
      \hline
      \multirow{2}*{$F_2$} &\multirow{2}*{310} & $  83,  3,$ & \multirow{2}*{4.42s} & \multirow{2}*{$131$} &\multirow{2}*{16.34s}&\multirow{2}*{$F_5$} &\multirow{2}*{709} & $  238,  4,$ & \multirow{2}*{633.51s} & \multirow{2}*{$  331$} &\multirow{2}*{OM}\\

      ~&~ &$4\times 2,37\times 1$&~&~& ~&~&~ &$4\times 3,1  2,55\times 1$&~&~& ~\\
      \hline
\multirow{2}*{$F_3$} &\multirow{2}*{504} & $  162,  6, 4,$ & \multirow{2}*{63.86s} & \multirow{2}*{$  218$} &\multirow{2}*{116.09s}&\multirow{2}*{$F_6$} &\multirow{2}*{927} & $  231,  3,$ & \multirow{2}*{470.40s} & \multirow{2}*{$  261$} &\multirow{2}*{297.40s}\\

      ~&~ &$6\times 2,34\times 1$&~&~& ~&~&~ &$2\times 2,23\times 1$&~&~& ~\\

       \hline
       \hline
      \multirow{2}*{$F_7$} &\multirow{2}*{1344} & $  4658,  7,2\times 5,3\times 4$ & \multirow{2}*{OM} & \multirow{2}*{$  4769$} &\multirow{2}*{OM}&
      \multirow{2}*{$F_{10}$} &\multirow{2}*{306} & $  110,  10,  6,3\times 4,$ & \multirow{2}*{29.95s} & \multirow{2}*{$  389$} &\multirow{2}*{OM}\\

      ~&~ &$7\times 3,16\times 2,29\times 1$&~&~& ~&
      ~&~ &$5\times 3,22\times 2,192\times 1$&~&~& ~\\

       \hline
      \multirow{2}*{$F_8$} &\multirow{2}*{1392} & $  5012,  5,3,$ & \multirow{2}*{OM} & \multirow{2}*{$  5046$} &\multirow{2}*{OM}&
      \multirow{2}*{$F_{11}$} &\multirow{2}*{255} & $  62,  8,  5, 4,$ & \multirow{2}*{32.09s} & \multirow{2}*{$  220$} &\multirow{2}*{185.35s}\\

      ~&~ &$3\times 2,20\times 1$&~&~& ~&
      ~&~ &$2\times 3,2\times 2,131\times 1$&~&~& ~
      \\

       \hline
           \multirow{2}*{$F_9$} &\multirow{2}*{1845} & $  4528,  7,3,$ & \multirow{2}*{OM} & \multirow{2}*{$  4576$} &\multirow{2}*{OM}&
           \multirow{2}*{$F_{12}$} &\multirow{2}*{228} & $  56,  13,2\times 6,2\times 4,$ & \multirow{2}*{11.24s} & \multirow{2}*{$  232$} &\multirow{2}*{200.43s}\\


      ~&~ &$5\times 2,28\times 1$&~&~& ~&
       ~&~ &$4\times 3,12\times 2,107\times 1$&~&~& ~\\

       \hline
              \hline
      \multirow{2}*{$F_{13}$} &\multirow{2}*{1446} & $  2394,  3,$ & \multirow{2}*{OM} & \multirow{2}*{$  2450$} &\multirow{2}*{OM}&
      \multirow{2}*{$F_{16}$} &\multirow{2}*{4777} & \multirow{2}*{$  8866$}  & \multirow{2}*{OM} & \multirow{2}*{$  8866$} &\multirow{2}*{OM}\\

      ~&~ &$8\times 2,37\times 1$&~&~& ~&
      ~&~ &~&~&~& ~\\

       \hline
      \multirow{2}*{$F_{14}$} &\multirow{2}*{1636} & $  2154,  3,$ & \multirow{2}*{OM} & \multirow{2}*{$  2206$} &\multirow{2}*{OM}&
      \multirow{2}*{$F_{17}$} &\multirow{2}*{4959} &\multirow{2}*{$  8415$}  & \multirow{2}*{OM} & \multirow{2}*{$  8415$} &\multirow{2}*{OM}\\

      ~&~ &$4\times 2,43\times 1$&~&~& ~&
      ~&~ &~&~&~& ~
      \\

       \hline
           \multirow{2}*{$F_{15}$} &\multirow{2}*{1085} & $  1800,  8,  4, 6\times 3,$ & \multirow{2}*{OM} & \multirow{2}*{$  1980$} &\multirow{2}*{OM}&
           \multirow{2}*{$F_{18}$} &\multirow{2}*{4869} & \multirow{2}*{$  8712$}& \multirow{2}*{OM} & \multirow{2}*{$  8712$} &\multirow{2}*{OM}\\


      ~&~ &$23\times 2,104\times 1$&~&~& ~&
       ~&~ &~&~&~& ~\\

       \hline

    \end{tabular}}\\
  {\small In this table, $1\times j$ is denoted by $j$ for short. For example, the \#block data $248$ of {\tt Yalmip} for $F_1$ stands for one block of size $248$.}
  \end{center}
\end{table*}

\begin{remark}
From Table 4, we can see that {\tt SparseSOS} obtains block-diagonalizations for $F_1,...,F_{15}$ while {\tt Yalmip} fails for all these polynomials. For polynomials $F_{16},F_{17},F_{18}$, both {\tt SparseSOS} and {\tt Yalmip} cannot obtain block-diagonalizations.

For $F_1,...,F_6$ and $F_{10},...,F_{12}$, {\tt SparseSOS} succeeds in obtaining the final SOS decompositions while {\tt Yalmip} fails on $F_4,F_5$ and $F_{10}$. Furthermore, {\tt SparseSOS} is faster than {\tt Yalmip} on all these polynomials except $F_6$. We observe that the reason why {\tt Yalmip} is faster on $F_6$ lies in the efficiency of SDP solvers. Mosek is faster on $F_6$ than csdp.

Although we select only three polynomials from each class of random polynomials, we notice that {\tt SparseSOS} performs similarly on polynomials from the same class. For example, for the classes $\textbf{randpoly}(10,6,10,0.01),$ $\textbf{randpoly}(10,6,10,0.015),$ and $\textbf{randpoly}\allowbreak(10,8,20,0.002),$ {\tt SparseSOS} succeeds in obtaining the final SOS decompositions. For the classes $\textbf{randpoly}(10,10,10,0.001)$ and $\textbf{randpoly}\allowbreak(10,8,20,0.005)$, {\tt SparseSOS} can obtain block-diagonali\-zations of the corresponding Gram matrices but cannot work out the final result. For the class $\textbf{randpoly}(10,8,20,0.01)$, {\tt SparseSOS} cannot obtain block-diagonalizations.
\end{remark}


%
%

\section{Conclusions}
We exploit the term sparsity of polynomials in SOS Programming by virtue of cross sparsity patterns and prove a sparse SOS decomposition theorem for sparse polynomials via PSD matrix decompositions with chordal sparsity patterns. Based on this, a new sparse SOS algorithm is proposed and is tested on various examples. The experimental results show that the new algorithm is efficient and extremely powerful. The algorithm can be combined with other simplification methods, e.g. \cite{be1}, to reduce computational costs further. We will apply the {\tt SparseSOS} algorithm to solve large scale unconstrained and constrained polynomial optimization problems in future work.

\bibliographystyle{plain}


\def\cprime{$'$}


\end{document}